\renewcommand\le{\leqslant}
\renewcommand\ge{\geqslant}
\newcommand\E{\mathsf E}
\renewcommand\P{\mathsf P}
\newcommand\R{\mathbb R}
\newcommand\eps{\varepsilon}
\newcommand\avg{\mathrm{avg}}
\DeclareMathOperator\sign{sign}
\DeclareMathOperator\tr{tr}
\DeclareMathOperator\Law{Law}
\title{Widths and rigidity of unconditional sets and random vectors}
\author{Yuri Malykhin\thanks{Steklov Mathematical Institute, Lomonosov Moscow
State University; malykhin-yuri@yandex.ru},
Konstantin Ryutin\thanks{Lomonosov Moscow State University, Moscow Center of
Fundamental and Applied Mathematics; kriutin@yahoo.com}}
\newtheorem{statement}{Statement}
\newtheorem*{corollary}{Corollary}
\newtheorem{theorem}{Theorem}
\newtheorem{lemma}{Lemma}
\newtheorem*{remark}{Remark}
\begin{document}
\maketitle
\abstract{
    We prove that any unconditional set in $\R^N$ that is invariant under cyclic
    shifts of coordinates is rigid in $\ell_q^N$, $1\le q\le 2$, i.e. it
    can not be well approximated by linear spaces of dimension essentially
    smaller than $N$. We apply the approach of E.D.~Gluskin to the setting of
    averaged Kolmogorov widths of unconditional random vectors
    or vectors  of independent mean zero random variables,
    and prove their rigidity. These results are obtained using  a general
    lower bound for the averaged Kolmogorov width via weak moments of
    biorthogonal random vector.
    This paper continues the study of the rigidity initiated by the first
    author.

    We also provide several corollaries including new bounds for Kolmogorov
    widths of mixed norm balls.  
}

\section{Introduction}
This paper continues the study of the rigidity of random vectors and sets
initiated by the first author in~\cite{M24}. 
This research is related to the notion of matrix rigidity developed within
Complexity Theory, see~\cite{L08,AW17,M22}.

Let us say (informally) that an
$N$-dimensional set is \textit{rigid} if it can not be well
approximated by linear spaces of dimension essentially smaller than $N$.
The typical example of an extremely rigid set is given by any finite dimensional section of a unit ball in a normed space:
Tikhomirov's theorem states that any nontrivial subspace does not
approximate this section better than a single point $0$.  

The error of the approximation in a normed space $X$ by subspaces of given
dimension is captured by the quantity called the Kolmogorov width of a set $K\subset
X$:
$$
d_n(K,X) := \inf_{Q_n\subset X}\sup_{x\in K}\rho(x,Q_n)_X,
$$
where
$$
\rho(x,Q)_X := \inf_{y\in Q}\|x-y\|_X,
$$
and $\inf_{Q_n}$ is over all linear subspaces in $X$ of dimension at most $n$. 

We consider the usual spaces $\ell_q^N$ with the norm
$\|x\|_q := (\sum_{k=1}^N |x_k|^q)^{1/q}$. Let $B_q^N$ be the unit ball of this
space. In the case $q=2$ this is the usual euclidean norm and we
write it as $|x|$. Throughout the paper $q'$ denotes the conjugate of $q$ i.e. $1/q+1/q'=1$.

\paragraph{Main results.}
Throughout the paper we freely use the informal notion of rigidity and  interpret  formal statements  as the rigidity  of a corresponding  object (a sequence of sets,  a random vector etc). Let us give  several examples.
We recall some estimates for the widths of finite-dimensional balls (see e.g.
~\cite{LGM,Tikh87,P85}).
It is well known that the octahedron $B_1^N$ is well approximated in $\ell_q^N$
for $q>2$ by low-dimensional spaces:
$$
d_n(B_1^N,\ell_q^N) \le C(q)N^{-\delta}\quad\mbox{if $n\ge N^{1-\delta}$,
$\delta=\delta(q)>0$}.
$$
However, if $1\le q\le 2$, then all the balls $B_p^N$ are rigid in $\ell_q^N$ in the following
sense:
$$
d_n(B_p^N,\ell_q^N) \ge c(\eps)d_0(B_p^N,\ell_q^N) = c(\eps)\sup_{x\in B_p^N}\|x\|_q,
\quad \mbox{if }n\le N(1-\eps).
$$
Indeed, if $p\le q\le 2$, then $d_0(B_p^N,\ell_q^N)=1$, but
$$
d_n(B_p^N,\ell_q^N) \ge d_n(B_1^N,\ell_2^N) = \sqrt{1-n/N} \ge \eps^{1/2}.
$$
If $p\ge q$, then it is known that
$$
d_n(B_p^N,\ell_q^N) = (N-n)^{\frac1q-\frac1p} \ge
    \eps^{\frac1q-\frac1p}N^{\frac1q-\frac1p} =
    \eps^{\frac1q-\frac1p}d_0(B_p^N,\ell_q^N).
    $$
E.D.~Gluskin~\cite{G87} proved that the set $V_k^N = B_\infty^N \cap kB_1^N$ is
 rigid:
\begin{equation}
    \label{glus_vkn}
d_n(V_k^N,\ell_q^N) \ge c(q)k^{1/q},\quad\mbox{if $n\le N/2$ and $1\le q\le 2$.}
\end{equation}
Later this method was used by E.M. Galeev, A.D. Izaak, A.A. Vasilieva \cite{Gal90,Gal95,Gal11,Iza90,Vas23,Vas13} to obtain lower bounds for the widths of balls in mixed norms and  cartesian products of several copies of $V_k^m.$ 
We also develop   these ideas of E.D. Gluskin and obtain more general results.

Let us say that a set $K\subset\R^N$ is unconditional if it is invariant under
arbitrary changes of signs of coordinates:
\begin{equation}
    \label{uncond_body}
(x_1,\ldots,x_N)\in K \quad \Rightarrow \quad (\pm x_1,\ldots, \pm x_N)\in K.
\end{equation}

Our technique allows to prove the rigidity for a large family of  unconditional convex
bodies in $\ell_q^N$. One of the main examples is the following.

\begin{theorem}
\label{th_uncond_cyclic}
    Let $K$ be an unconditional set in $\R^N$ that is also invariant under
    cyclic shifts of coordinates. Then for $1\le q\le 2$, any $\eps\in(0,1)$ we
    have
    $$
    d_n(K,\ell_q^N) \ge c(q,\eps)\sup_{x\in K}\|x\|_q,\quad\mbox{if $n\le
    N(1-\eps)$.}
    $$
\end{theorem}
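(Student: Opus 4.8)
The plan is to dualize the width and reduce to a statement about every subspace of large codimension. Since $\rho(x,Q_n)_{\ell_q^N}=\sup\{\langle x,\phi\rangle:\phi\in Q_n^{\perp},\ \|\phi\|_{q'}\le1\}$, taking suprema in either order gives
$$
d_n(K,\ell_q^N)=\inf_{\dim Q_n\le n}\ \sup_{0\ne\phi\in Q_n^{\perp}}\frac{h_K(\phi)}{\|\phi\|_{q'}},\qquad h_K(\phi):=\sup_{x\in K}\langle x,\phi\rangle.
$$
Writing $L:=Q_n^{\perp}$ (so $m:=\dim L=N-n\ge\eps N$) and $M:=\sup_{x\in K}\|x\|_q$, it suffices to show: every subspace $L\subseteq\R^N$ of dimension $m$ contains a vector $\phi$ with $h_K(\phi)\ge c(q)\,(m/N)^{1/q}M\,\|\phi\|_{q'}$. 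Fix $x^{*}=(a_1,\dots,a_N)\in K$ with $a_i\ge0$ (legitimate by unconditionality) and $\|x^{*}\|_q$ as close to $M$ as we like. Cyclic invariance and unconditionality give $\eta\odot\pi^{\tau}x^{*}\in K$ for every sign pattern $\eta\in\{\pm1\}^N$ and every cyclic shift $\pi^{\tau}$, and taking $\eta=\sign\phi$ yields the basic inequality $h_K(\phi)\ge\langle\pi^{\tau}x^{*},|\phi|\rangle$ for every $\tau$ — this is the only place unconditionality and cyclic invariance enter.

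The construction of $\phi$ is the heart of the matter. Let $P=P_L$ be the orthogonal projection onto $L$; recall $P_{jj}=|Pe_j|^2\le1$, $\sum_jP_{jj}=m$, and $\sum_kP_{jk}^2=P_{jj}$. For a shift $\tau$ and signs $\xi\in\{\pm1\}^N$ set $b=b^{(\tau)}:=\pi^{\tau}x^{*}$ and
$$
\phi=\phi^{(\tau,\xi)}:=P\bigl(\xi\odot b^{\,q-1}\bigr)\in L,\qquad \phi_k=\sum_j\xi_j\,b_j^{\,q-1}P_{jk}.
$$
The lower Khintchine inequality (constant $1/\sqrt2$), keeping only the diagonal term in $\sum_j b_j^{2(q-1)}P_{jk}^2\ge b_k^{2(q-1)}P_{kk}^2$, gives $\E_{\xi}h_K(\phi)\ge\E_\xi\langle b,|\phi|\rangle\ge\tfrac1{\sqrt2}\sum_kb_k^{\,q}P_{kk}=:\tfrac1{\sqrt2}S(\tau)$. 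For the $q'$-moment, apply the upper Khintchine inequality (constant $C_{q'}$, depending only on $q'\ge2$) and then the key algebraic step: with $r_k:=\sum_jb_j^{2(q-1)}P_{jk}^2$, Jensen applied to the convex map $t\mapsto t^{q'/2}$ with weights $P_{jk}^2/P_{kk}$ yields $r_k^{q'/2}\le P_{kk}^{q'/2-1}\sum_jP_{jk}^2\,b_j^{(q-1)q'}$; summing over $k$, using $P_{kk}\le1$, $\sum_kP_{jk}^2=P_{jj}$, and the identity $(q-1)q'=q$, we obtain
$$
\E_\xi\|\phi\|_{q'}^{q'}\le C_{q'}\sum_kr_k^{q'/2}\le C_{q'}\sum_jb_j^{\,q}P_{jj}=C_{q'}\,S(\tau).
$$
Thus $\E_\xi\|\phi\|_{q'}\le(C_{q'}S(\tau))^{1/q'}$, and since $\E_\xi\bigl[h_K(\phi)-\lambda\|\phi\|_{q'}\bigr]\ge S(\tau)/\sqrt2-\lambda(C_{q'}S(\tau))^{1/q'}>0$ for $\lambda=c(q)S(\tau)^{1/q}$, some sign pattern $\xi$ gives $h_K(\phi^{(\tau,\xi)})\ge c(q)\,S(\tau)^{1/q}\,\|\phi^{(\tau,\xi)}\|_{q'}$.

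Finally choose $\tau$. Averaging over the $N$ cyclic shifts, $\E_\tau S(\tau)=\sum_kP_{kk}\,\E_\tau b_k^{\,q}=\frac{\|x^{*}\|_q^{q}}{N}\sum_kP_{kk}=\frac{\|x^{*}\|_q^{q}\,m}{N}$, so there is a $\tau$ with $S(\tau)\ge\|x^{*}\|_q^{q}m/N$; for that $\tau$ we found $\phi\in L$ with $h_K(\phi)\ge c(q)\bigl(\|x^{*}\|_q^{q}m/N\bigr)^{1/q}\|\phi\|_{q'}=c(q)\,\|x^{*}\|_q(m/N)^{1/q}\|\phi\|_{q'}$. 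Letting $\|x^{*}\|_q\to M$ and inserting into the dual formula with $m\ge\eps N$ gives $d_n(K,\ell_q^N)\ge c(q)\,\eps^{1/q}M$, which is the assertion with $c(q,\eps)=c(q)\,\eps^{1/q}$. The crux, and the main place one can go wrong, is the design of the random functional: the weights $b^{\,q-1}$ together with the projection $P_L$ are exactly what make the lower bound for $\E_\xi h_K(\phi)$ and the upper bound for $\E_\xi\|\phi\|_{q'}^{q'}$ collapse to the same quantity $S(\tau)$, via the Hölder-type identity $(q-1)q'=q$ and the convexity step above. (For the endpoint $q=1$, where $q'=\infty$, the $q'$-moment computation degenerates — a $\sqrt{\log N}$ factor appears — and that case must be treated separately; we suppress it in this sketch.)
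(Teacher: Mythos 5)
Your argument is correct, and the core geometric idea — dualize the width, build a candidate functional in $Q_n^\perp$ by projecting a sign-randomized vector with coordinates proportional to $|x^*|^{q-1}$, and balance a lower bound for the pairing against a Khintchine-type upper bound for the $\ell_{q'}$-norm — is exactly the Gluskin mechanism that underlies the paper's proof as well. What differs is the route through the estimates. The paper first normalizes so that $\E|\xi_i|^q=1$, treats sign changes and cyclic shifts as a single random element $\mathbf g$, and passes through the general machinery of Lemma~\ref{lem_dual}, Theorem~\ref{th_sp} and Lemma~\ref{lem_rud}: biorthogonality gives $\E\langle\xi,P\eta\rangle=\tr P=N-n$ exactly, and the denominator is controlled by the weak moment $\sigma_{q'}(\eta)$, bounded via the RUD property together with the type-$2$ inequality in $L_{q'}$. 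You instead fix the shift $\tau$, randomize only over signs, replace the biorthogonality identity by the support-function inequality $h_K(\phi)\ge\langle\pi^\tau x^*,|\phi|\rangle$ plus the lower Khintchine inequality, and replace the weak-moment bound by an explicit upper Khintchine estimate followed by the Jensen step with weights $P_{jk}^2/P_{kk}$ and the identities $\sum_kP_{jk}^2=P_{jj}$, $(q-1)q'=q$; the averaging over $\tau$ is done at the very end. This is essentially Gluskin's original computation for $V_k^N$ transplanted to a general unconditional, cyclically invariant $K$: it is self-contained and avoids the RUD/type-$2$ language, at the cost of the somewhat delicate Jensen manipulation (and of reproving, in effect, the type-$2$ inequality for Rademacher sums). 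The paper's route is more modular and yields the stronger intermediate Statement~\ref{stm_uncond} and Theorem~\ref{thm_uncond}, which cover arbitrary random vectors with property $(\star)$ and make the other applications in Section~\ref{sec_examples} immediate. You are right to flag $q=1$ ($q'=\infty$) as degenerate; the paper likewise defers that endpoint to~\cite{M24}.
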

Here and later the letters $c,C,c_q,c(q,\eps),\ldots$ denote positive quantities that
may depend only on the indicated parameters and their values can vary from
line to line.

An important example covered by Theorem  \ref{th_uncond_cyclic} is any unconditional  set invariant under all  permutations    of coordinates. In fact all we need is the invariance under  some transitive subgroup of permutations.

It seems interesting to understand the dependence on $q$ in
Theorem~\ref{th_uncond_cyclic}.
We remark that~\eqref{glus_vkn} was improved in~\cite{MR17} to the form
$d_{N/2}(V_k^N,\ell_q^N)\ge ck^{1/q}$, $1\le q\le 2$, with an absolute
constant $c$.

A similar  geometric problem was considered by B. Green, A. Sah, M. Sawhney,
Y.Zhao in \cite{G,SSZ}. They studied the widths of an orbit $X=Gx$ of 
some fixed unit vector under a finite subgroup $G$ of the orthogonal group  $\mathrm{O}(N)$. 
They established that $d_{N-k}(X,\ell_2^N)\le c\log^{-1/2}(N/k)$
for any such set
when $1\le k \le N/\log{(3N)}^C$. This estimate is sharp. 
We obtain lower estimate for the Kolmogorov width in a particular situation of
the orbit of a single vector under the group of cyclic shifts of coordinates
and arbitrary changes of their signs~$\eqref{orbit}$ in $\ell_q^N$, but for any
$1\le q \le 2$.

Let us explain the main steps  of the proof and give relevant definitions.
We freely use   probabilistic notation.  
Let $\xi$ be a (Borel) random vector in a normed space $X$.
Define the $p$-averaged Kolmogorov width, $1\le p<\infty$, of $\xi$ in $X$ as
$$
d^{\mathrm{avg}}_n(\xi, X)_p := \inf_{Q_n\subset X} (\E \rho(\xi,Q_n)_X^p)^{1/p}.
$$
For the details and the history of this notion we refer to~\cite{M24}.

We call random vectors $\xi=(\xi_1,\ldots,\xi_N)$ and
$\eta=(\eta_1,\ldots,\eta_N)$ biorthogonal, when  $\E\xi_i\eta_i=1$ and
$\E\xi_i\eta_j=0$ for all $i\ne j$.
Recall also the definition of the weak $p$-moment of a random vector  $\eta$ in
$\R^N$:
$$
\sigma_p(\eta) := \sup_{v\in\R^N,|v|=1}\left(\E|\langle \eta,v\rangle|^p\right)^{1/p}.
$$

\begin{theorem}
    \label{th_sp}
         Let $\xi$ and $\eta$ be biorthogonal random vectors in $\R^N$. Then for any  $n<N$ we have
$$
    d_n^\avg(\xi,\ell_q^N)_q \ge \frac{(N-n)^{1/q}}{\sigma_{q'}(\eta)},\quad
    1<q\le 2,
$$
$$
    d_n^\avg(\xi,\ell_q^N)_q \ge
    \frac{(N-n)^{1/2}N^{1/q-1/2}}{\sigma_{q'}(\eta)},\quad
    2\le q<\infty.
$$
\end{theorem}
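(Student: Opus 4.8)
\emph{Proof plan.} The plan is to run Gluskin's duality argument, using the biorthogonal vector $\eta$ to manufacture the ``test functionals''. Fix a subspace $Q_n\subset\ell_q^N$ with $\dim Q_n=n<N$ and let $P$ be the orthogonal projection onto $Q_n^\perp$, so that $\tr P=\dim Q_n^\perp=N-n\ge1$. By the standard duality description of the norm on the quotient $\ell_q^N/Q_n$, one has $\rho(x,Q_n)_q=\sup\{|\langle v,x\rangle|:\ v\in Q_n^\perp,\ \|v\|_{q'}\le1\}$ for every $x\in\R^N$. Since the range of $P$ equals $Q_n^\perp$, inserting the (random, but admissible) vector $v=\pm P\eta/\|P\eta\|_{q'}$ gives, using the symmetry of $P$,
$$\rho(\xi,Q_n)_q\ \ge\ \frac{|\langle P\eta,\xi\rangle|}{\|P\eta\|_{q'}}\ =\ \frac{|\langle P\xi,\eta\rangle|}{\|P\eta\|_{q'}}\ =:R,$$
with the convention $R:=0$ on the null event $\{P\eta=0\}$, where $\langle P\xi,\eta\rangle=\langle\xi,P\eta\rangle=0$ too, so that $R\,\|P\eta\|_{q'}=|\langle P\xi,\eta\rangle|$ identically.

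Next I would bring in biorthogonality together with H\"older's inequality. Expanding in coordinates and using $\E\xi_i\eta_j=\delta_{ij}$ yields the key identity $\E\langle P\xi,\eta\rangle=\sum_i P_{ii}=\tr P=N-n$. Hence, applying H\"older with exponents $q$ and $q'$,
$$N-n\ \le\ \E|\langle P\xi,\eta\rangle|\ =\ \E\bigl(R\,\|P\eta\|_{q'}\bigr)\ \le\ \bigl(\E R^q\bigr)^{1/q}\bigl(\E\|P\eta\|_{q'}^{q'}\bigr)^{1/q'}\ \le\ \bigl(\E\rho(\xi,Q_n)_q^q\bigr)^{1/q}\bigl(\E\|P\eta\|_{q'}^{q'}\bigr)^{1/q'}.$$
Everything then reduces to an upper bound for $\E\|P\eta\|_{q'}^{q'}$ in terms of $\sigma_{q'}(\eta)$. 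Since $(P\eta)_i=\langle Pe_i,\eta\rangle$ and $|Pe_i|^2=P_{ii}$, the definition of the weak moment gives $\E|(P\eta)_i|^{q'}\le P_{ii}^{q'/2}\,\sigma_{q'}(\eta)^{q'}$, and summing over $i$ yields $\E\|P\eta\|_{q'}^{q'}\le \sigma_{q'}(\eta)^{q'}\sum_{i=1}^N P_{ii}^{q'/2}$.

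It remains to estimate $\sum_i P_{ii}^{q'/2}$ using only $\sum_i P_{ii}=N-n$ and $0\le P_{ii}\le1$, and this is where the two regimes split. If $1<q\le2$ then $q'\ge2$, the map $t\mapsto t^{q'/2}$ is convex on $[0,1]$, so its sum over the given polytope is maximised at a $0/1$ vertex, giving $\sum_i P_{ii}^{q'/2}\le N-n$; feeding this back produces $\bigl(\E\rho(\xi,Q_n)_q^q\bigr)^{1/q}\ge(N-n)^{\,1-1/q'}/\sigma_{q'}(\eta)=(N-n)^{1/q}/\sigma_{q'}(\eta)$. If $2\le q<\infty$ then $q'\le2$, $t\mapsto t^{q'/2}$ is concave, and by Jensen $\sum_i P_{ii}^{q'/2}\le N\bigl(\tfrac1N\sum_i P_{ii}\bigr)^{q'/2}=N^{\,1-q'/2}(N-n)^{q'/2}$; this gives $\bigl(\E\rho(\xi,Q_n)_q^q\bigr)^{1/q}\ge (N-n)^{1/2}N^{\,1/2-1/q'}/\sigma_{q'}(\eta)=(N-n)^{1/2}N^{\,1/q-1/2}/\sigma_{q'}(\eta)$, since $1/2-1/q'=1/q-1/2$. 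Taking the infimum over $Q_n$ finishes both cases. The argument is almost forced once the right test vector is chosen; the only places needing a little care are the duality formula for $\rho(\cdot,Q_n)_q$ combined with the degenerate event $\{P\eta=0\}$, and the elementary convexity/concavity optimisation of $\sum_i P_{ii}^{q'/2}$ — the latter being the one genuinely ``combinatorial'' step and the source of the dichotomy in the statement.
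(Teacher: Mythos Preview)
Your proof is correct and follows essentially the same route as the paper's: project $\eta$ onto $Q_n^\perp$ to obtain the test functional, use biorthogonality to identify the numerator as $\tr P=N-n$, bound $\E\|P\eta\|_{q'}^{q'}$ via the weak moment and $\sum_i|Pe_i|^{q'}=\sum_i P_{ii}^{q'/2}$, and split cases according to $q\lessgtr 2$. The paper packages the duality/H\"older step as a separate lemma and phrases the $q\le 2$ estimate simply as $|v_k|^{q'}\le|v_k|^2$ (since $|v_k|\le 1$) rather than via a vertex argument, but the arguments are otherwise identical.
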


One can bound weak moments for vectors that are unconditional in $L_p$, or that
satisfy a weaker RUD (Random Unconditional Divergence) property~\eqref{rud}, see
Lemma~\ref{lem_rud}.
We learned this notion from S.V.~Astashkin (see~\cite{AC18});
he also suggested to use it in this context.
The crucial fact used in Lemma~\ref{lem_rud} is the type-2 property of the
$L_{q'}$ space when  $q'\ge 2$.

In order to obtain the RUD property for the biorthogonal vector $\eta$ we
impose more restrictive requirements on $\xi$.

We say that a random vector  $\xi=(\xi_1,\ldots,\xi_N)$ in $\R^N$ has the
property  ($\star$) if either the distribution  $\xi$ is unconditional, i.e.
$\Law(\xi_1,\ldots,\xi_N)=\Law(\pm\xi_1,\ldots,\pm\xi_N)$ for any choice of
signs; or random variables  $\xi_1,\ldots,\xi_N$ are independent and
$\E\xi_1=\cdots=\E\xi_N=0$.
\begin{theorem}
    \label{thm_uncond}
    Let $1\le q\le 2$, we assume that random vector  $\xi=(\xi_1,\ldots,\xi_N)$ satisfies
    ($\star$) and  $1\le\E|\xi_i|^q < \infty$ for $i=1,\ldots,N$.
    Then for any $\eps\in(0,1)$ we have
$$
    d_n^{\mathrm{avg}}(\xi, \ell_q^N)_q \ge c(q,\eps)N^{1/q},
    \quad\mbox{if $n\le N(1-\eps)$.}
$$
\end{theorem}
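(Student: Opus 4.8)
The plan is to obtain the estimate from Theorem~\ref{th_sp}. For $1<q\le 2$ that theorem gives $d_n^\avg(\xi,\ell_q^N)_q\ge (N-n)^{1/q}/\sigma_{q'}(\eta)$ for \emph{every} random vector $\eta$ biorthogonal to $\xi$, and $n\le N(1-\eps)$ forces $N-n\ge\eps N$, so it suffices to exhibit one biorthogonal $\eta$ with $\sigma_{q'}(\eta)\le C(q)$. For $q>1$ I would take the ``H\"older conjugate'' of $\xi$: set $\eta_i^0:=|\xi_i|^{q-1}\sign(\xi_i)/\E|\xi_i|^q$ and then $\eta_i:=\eta_i^0$ in the unconditional case, $\eta_i:=\eta_i^0-\E\eta_i^0$ in the independent case (centering is harmless for biorthogonality exactly because $\E\xi_i=0$). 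Since $(q-1)q'=q$ and $\E|\xi_i|^q<\infty$, these vectors lie in $L_{q'}$.

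I would then check three things. \emph{Biorthogonality:} $\E\xi_i\eta_i=\E|\xi_i|^q/\E|\xi_i|^q=1$, and for $i\ne j$ the cross term vanishes — in the unconditional case because $(\xi_i,\xi_j)$ has the same law as $(-\xi_i,\xi_j)$, in the independent case because $\E\xi_i\eta_j=\E\xi_i\cdot\E\eta_j^0=0$. \emph{Coordinate sizes:} $(q-1)q'=q$ gives $\E|\eta_i^0|^{q'}=\E|\xi_i|^q/(\E|\xi_i|^q)^{q'}$, hence $\|\eta_i^0\|_{q'}=(\E|\xi_i|^q)^{-1/q}\le 1$ thanks to the normalization $\E|\xi_i|^q\ge 1$; centering costs at most a factor $2$, so $\|\eta_i\|_{q'}\le 2$. \emph{Inheritance of ($\star$):} the coordinatewise odd map $t\mapsto|t|^{q-1}\sign t$ preserves sign-invariance of the law, and independent centered variables stay independent and centered; so $\eta$ has the RUD property and Lemma~\ref{lem_rud} applies.

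Plugging this into Lemma~\ref{lem_rud} (whose proof uses that $L_{q'}$ has type $2$, which holds because $q'\ge 2$) gives $\sigma_{q'}(\eta)\le C(q)\max_i\|\eta_i\|_{q'}\le 2C(q)$. The mechanism is: use ($\star$) — directly by unconditionality, or via symmetrization when the $\xi_i$ are independent — to insert i.i.d.\ signs $\eps_i$ into $\E|\sum_i v_i\eta_i|^{q'}$; apply Khinchine's inequality in the $\eps_i$; and then pull the $\ell_2^N$-sum out of the $L_{q'}$-norm by Minkowski's integral inequality (legitimate since $q'/2\ge 1$), which for $|v|=1$ leaves the bound $C(q)\big(\sum_i v_i^2\|\eta_i\|_{q'}^2\big)^{1/2}\le C(q)\max_i\|\eta_i\|_{q'}$. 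Substituting $\sigma_{q'}(\eta)\le 2C(q)$ into Theorem~\ref{th_sp} gives $d_n^\avg(\xi,\ell_q^N)_q\ge(\eps N)^{1/q}/(2C(q))=c(q,\eps)N^{1/q}$, which is the theorem for $1<q\le 2$.

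The step I expect to be the real obstacle is the endpoint $q=1$: Theorem~\ref{th_sp} is not stated there, and passing to the limit $q\to 1^+$ loses too much, since for the basic example of a vector of independent signs every biorthogonal $\eta$ has $\sigma_\infty(\eta)$ of order $\sqrt N$, which would yield only order $\sqrt N$ in place of the required order $N$. For $q=1$ I would argue directly, in the spirit of Gluskin. First reduce to a \emph{bounded} vector by truncating each $\xi_i$ at a level $T_i$ large enough that $\E(|\xi_i|-T_i)_+$ is as small as desired while $\E\big(|\xi_i|\,\mathbf 1[|\xi_i|\le T_i]\big)\ge 1/2$ (re-centering in the independent case); this perturbs $d_n^\avg(\cdot,\ell_1^N)_1$ by at most an arbitrarily small multiple of $N$. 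Then, using $\rho(\xi,Q)_1=\sup\{\langle\xi,u\rangle:u\in Q^\perp,\ \|u\|_\infty\le 1\}$, bound $\E\rho(\xi,Q)_1$ from below: a vertex of the $(N-n)$-dimensional polytope $Q^\perp\cap B_\infty^N$ has at least $N-n$ coordinates equal to $\pm1$, and exploiting sign-invariance of the law of $\xi$ one shows the supremum captures, on average, a contribution of order $N-n$. Making this averaging quantitative is where the genuine work of the $q=1$ case lies.
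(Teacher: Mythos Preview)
For $1<q\le 2$ your argument is correct and essentially identical to the paper's proof (Statement~\ref{stm_uncond}): build the biorthogonal $\eta$ from $|\xi_i|^{q-1}\sign\xi_i$, check that $\eta$ inherits ($\star$), bound $\|\eta_i\|_{q'}\le 2$, deduce the RUD property and apply Lemma~\ref{lem_rud} to get $\sigma_{q'}(\eta)\le C(q)$, then invoke Theorem~\ref{th_sp}. The only differences are cosmetic: the paper first rescales to $\E|\xi_i|^q=1$ rather than dividing $\eta_i$ by $\E|\xi_i|^q$, and it cites Rosenthal's inequality for the unconditionality of $\eta$ in $L_{q'}$ where you justify RUD directly by symmetrization.

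For $q=1$ the paper does not prove the statement here either: it refers to Theorem~3.1 and Corollary~3.1 of~\cite{M24}, whose proof uses VC-dimension. Your sketched approach via truncation and the vertex structure of $Q^\perp\cap B_\infty^N$ is a genuinely different line of attack, but, as you yourself concede, the decisive averaging estimate is left unquantified, so this part remains a heuristic rather than a proof.
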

The case of $q=1$ is covered in~\cite{M24}, see Theorem 3.1 and Corollary 3.1
there; the proof uses different techniques (VC-dimension). 
The case $1<q\le 2$ is contained in Statement~\ref{stm_uncond} below.

Now the proof of the Theorem \ref{th_uncond_cyclic} is a one--liner. Indeed, consider any $x\in K$ and
let $\xi$ be the random point that is obtained from $x$ by randomly changing signs
of coordinates and randomly shifting them. Then $\xi$ has the ($\star$) property
and $\E|\xi_i|^q=N^{-1}\|x\|_q^q$, so
\begin{equation}
    \label{proof_main_th}
d_n(K,\ell_q^N) \ge d_n^\avg(\xi,\ell_q^N)_q \ge c(q,\eps)\|x\|_q.
\end{equation}

Other applications  of Theorem~\ref{th_sp} and
Theorem~\ref{thm_uncond}
are given in Section~\ref{sec_examples}.

\paragraph{Rigidity of finite function systems.}
We can treat a random vector $\xi=(\xi_1,\dots, \xi_N)$
composed of random variables $\xi_i\colon\Omega\to\R$ as a system of elements from the normed space, say when $\xi_i\in L^q(\Omega).$
We can reformulate our results in terms of widths of finite sets in $L^q$ using the
following simple equality (see~\cite[Section 2]{M24}): 
$$
d_n^{\mathrm{avg}}(\xi,\ell_q^N)_q =
N^{1/q} d_n^{\mathrm{avg}}(\{\xi_1,\ldots,\xi_N\},L_q(\Omega))_q.
$$
Here the width on the right side is the average width of a set of $N$
 elements of
$L_q(\Omega)$. The average width of a finite set is defined as an average width
of a random point in that set:
$$
d^{\mathrm{avg}}_n(\{x_1,\ldots,x_N\}, X)_q := \inf_{Q_n\subset X}\left(\frac1N
\sum_{i=1}^N \rho(x_i,Q_n)_X^q\right)^{1/q}.
$$

So, we can give a functional--theoretic interpretation of
Theorem~\ref{th_sp}. If systems of functions $\{f_1,\ldots,f_N\}$
and $\{g_1,\ldots,g_N\}$ are biorthogonal, and all the norms $\|\sum
a_ig_i\|_{q'}$, $\sum a_i^2=1$, are bounded, then the set $\{f_1,\ldots,f_N\}$
is rigid in $L_q$. Therefore we generalize Theorem 4.2 from~\cite{M24} on
$L_q$-rigidity of  orthonormal systems with this property
(they are called $S_{q'}$-systems).

Theorem~\ref{thm_uncond} can be formulated in purely probabilistic terms:
if $1\le q\le 2$, a random vector $\xi=(\xi_1,\ldots,\xi_N)\colon\Omega\to\R^N$ satisfies 
$(\star)$ and $\E|\xi_i|^q \ge 1$, then for any random variables
$\eta_1,\ldots,\eta_n\in L_q(\Omega)$, $n\le N(1-\eps)$, and any coefficients 
$(a_{i,j})$, we have
$$
\sum_{i=1}^N \E|\xi_i - \sum_{j=1}^n a_{i,j}\eta_j|^q \ge c(q,\eps)N.
$$

In the case $q>2$ good approximation is possible. It was proved in~\cite{M24}
that for any $q>2$ there exists a sequence $\xi_1^{(N)},\ldots,\xi_N^{(N)}$ of
independent symmetric random variables, $\E|\xi_i^{(N)}|^q=1$, such
that $d_n(\{\xi_1^{(N)},\ldots,\xi_N^{(N)}\},L_q)=o(1)$, $n=o(N)$. One can
generalize this result to the case of  Orlicz spaces.
It seems interesting to characterize  symmetric  spaces where independent zero-mean
random variables are rigid.

The next Section is devoted to the proofs of the results stated above. 
We have to  deal only with the case $q>1$.

\section{Proofs of the main results}

\begin{lemma}
    \label{lem_dual}
    Let $\xi$ be a random vector in a normed space $X$ and $Q\subset X$ be a
    subspace. For any random vector $\zeta$ in $X^*$, such that
    $\P(\zeta\in Q^\perp)=1$, and for any $1\le p<\infty$, we have
    $$
    (\E\rho^p(\xi,Q)_X)^{1/p} \ge
    \frac{\E\langle\xi,\zeta\rangle}{(\E\|\zeta\|_{X^*}^{p'})^{1/p'}}.
    $$
    We assume that all the expectations are finite; $Q^\perp=\{x^*\in
    X^*:\langle x^*,x\rangle=0\;\;\forall x\in Q\}$.
\end{lemma}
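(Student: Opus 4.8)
The plan is short, as this is the standard duality bound for the distance to a subspace combined with Hölder's inequality; I would not expect a real obstacle. Since $\P(\zeta\in Q^\perp)=1$, for almost every $\omega$ the functional $\zeta(\omega)$ annihilates $Q$, so that for every $y\in Q$
$$\langle\zeta(\omega),\xi(\omega)\rangle=\langle\zeta(\omega),\xi(\omega)-y\rangle\le\|\zeta(\omega)\|_{X^*}\,\|\xi(\omega)-y\|_X;$$
taking the infimum over $y\in Q$ yields the pointwise inequality
$$\langle\zeta(\omega),\xi(\omega)\rangle\le\|\zeta(\omega)\|_{X^*}\,\rho(\xi(\omega),Q)_X\qquad\text{a.s.}$$
(This is the easy half of the identity $\rho(x,Q)_X=\sup\{\langle x^*,x\rangle:x^*\in Q^\perp,\ \|x^*\|_{X^*}\le1\}$, and it is all we use.)

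Next I would take expectations of this inequality and apply Hölder's inequality with exponents $p'$ and $p$:
$$\E\langle\xi,\zeta\rangle\le\E\big(\|\zeta\|_{X^*}\,\rho(\xi,Q)_X\big)\le\big(\E\|\zeta\|_{X^*}^{p'}\big)^{1/p'}\big(\E\rho(\xi,Q)_X^{p}\big)^{1/p}.$$
Dividing by $(\E\|\zeta\|_{X^*}^{p'})^{1/p'}$ gives the claim; the degenerate cases (this quantity equal to $0$, or to $+\infty$ — the latter ruled out by hypothesis) are trivial. For $p=1$ one reads $p'=\infty$ and replaces $(\E\|\zeta\|_{X^*}^{p'})^{1/p'}$ by $\mathrm{ess\,sup}\,\|\zeta\|_{X^*}$; the argument is the same.

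The only genuinely technical point is measurability of the scalar random variables $\langle\xi,\zeta\rangle$, $\|\zeta\|_{X^*}$ and $\rho(\xi,Q)_X$, needed for the expectations to make sense; this follows from Borel measurability of $\xi$ and $\zeta$ and continuity of the maps involved, and is in any case implicit in the standing assumptions. One may also replace $Q$ by its closure without loss of generality, since $Q^\perp=\overline{Q}^{\,\perp}$ and $\rho(\cdot,Q)=\rho(\cdot,\overline{Q})$, so closedness of $Q$ need not be assumed.
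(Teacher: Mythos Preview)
Your proof is correct and follows essentially the same approach as the paper: a pointwise duality inequality $\langle\zeta,\xi\rangle\le\|\zeta\|_{X^*}\rho(\xi,Q)_X$ for $\zeta\in Q^\perp$, followed by taking expectations and applying H\"older. Your multiplicative form of the pointwise bound is in fact slightly cleaner than the paper's quotient form, as it avoids any issue with $\|\zeta\|_{X^*}=0$.
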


\begin{proof}
    It is obvious that $\rho(x,Q)_X \ge \langle x,z\rangle / \|z\|_{X^*}$ for
    any $x\in X$ and $z\in Q^\perp$.
    We use this inequality for $x:=\xi$ and $z:=\zeta$ and obtain
    $$
    \E\langle\xi,\zeta\rangle =
    \E\frac{\langle\xi,\zeta\rangle}{\|\zeta\|_{X^*}}\|\zeta\|_{X^*}\le
    \E\rho(\xi,Q)_X\|\zeta\|_{X^*}.
    $$
    Finally, we apply H\"{o}lder inequality and get the required bound:
    $$
        \E\langle\xi,\zeta\rangle \le
        (\E\rho^p(\xi,Q)_p)^{1/p}(\E\|\zeta\|_{X^*}^{p'})^{1/p'}.
    $$
\end{proof}

Let us prove Theorem~\ref{th_sp}.

\begin{proof}
    We consider some  $n$-dimensional subspace $Q_n\subset\R^N$.
    Let $P$ be the orthoprojector onto $Q_n^\perp$, and $v_k := Pe_k$.
    We apply Lemma~\ref{lem_dual} with $\xi$ and $\zeta := P\eta$:
    \begin{equation}
        \label{dual_low}
        (\E\rho^q(\xi,Q_n)_q)^{1/q}
        \ge \frac{\E\langle\xi,P\eta\rangle}{(\E\|P\eta\|_{q'}^{q'})^{1/q'}}.
    \end{equation}

    We evaluate the numerator using biorthogonality:
    $$
    \E \langle \xi,P\eta\rangle = \E\langle \sum_{i=1}^N \xi_i e_i, \sum_{i=1}^N
    \eta_i v_i\rangle = \sum_{i=1}^N \langle e_i,v_i\rangle = \tr P = N-n.
    $$

    As to the denominator of~\eqref{dual_low}:
    $$
    \E\|P\eta\|_{q'}^{q'} = 
    \E\sum_{k=1}^N |\langle P\eta,e_k\rangle|^{q'} = 
    \E\sum_{k=1}^N |\langle \eta,v_k\rangle|^{q'} \le
    \sigma_{p'}(\eta)^{q'}\sum_{k=1}^N|v_k|^{q'}.
    $$
    We estimate the sum when $q\le 2$ as
    $\sum_{k=1}^N|v_k|^{q'} \le \sum_{k=1}^N|v_k|^2 = N-n$, since $|v_k|\le 1.$ 
    In the case $q>2$:
    $$
    (\sum_{k=1}^N|v_k|^{q'})^{1/q'} \le N^{1/q'-1/2}(\sum_{k=1}^N|v_k|^2)^{1/2} =
    N^{1/q'-1/2}(N-n)^{1/2}.
    $$
    In both cases we obtain the required estimates.
\end{proof}

We can estimate $\sigma_p$ using the RUD-property.  This definition works for any normed space $X$. We say that
a system $x_1,\ldots,x_N\in X$ has the RUD  property with a constant $D$, if for
any coefficients $c_k$ we have
\begin{equation}
    \label{rud}
\|\sum_{k=1}^N c_k x_k \|_X \le D\cdot \E_\eps \|\sum_{k=1}^N \eps_k c_k x_k \|_X.
\end{equation}
Here $\E_\eps$ is the mathematical expectation with respect to independent
random signs $\eps_k$, $\P(\eps_k=1)=\P(\eps_k=-1)=1/2$.

\begin{lemma}
    \label{lem_rud}
    Let $2\le p<\infty$ and assume that random variables
    $\eta_1,\ldots,\eta_N\colon\Omega\to\R$
    have the RUD property in $L_p(\Omega)$ with constant $D$. Then
\begin{equation}
    \label{glus}
    \sigma_p(\eta) \le C_p D \max_{1\le i\le N}(\E|\eta_i|^p)^{1/p}.
\end{equation}
\end{lemma}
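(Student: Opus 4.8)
The plan is to bound the weak moment $\sigma_p(\eta)=\sup_{|v|=1}(\E|\langle\eta,v\rangle|^p)^{1/p}$ by first applying the RUD property to the random variable $\langle\eta,v\rangle=\sum_k v_k\eta_k$, viewed as an element of $L_p(\Omega)$. Writing $c_k:=v_k$, the RUD hypothesis gives
$$
\bigl(\E|\textstyle\sum_k v_k\eta_k|^p\bigr)^{1/p} = \bigl\|\sum_k v_k\eta_k\bigr\|_{L_p} \le D\,\E_\eps\bigl\|\sum_k \eps_k v_k\eta_k\bigr\|_{L_p},
$$
so the task reduces to estimating the symmetrized quantity $\E_\eps\|\sum_k\eps_k v_k\eta_k\|_{L_p}$.

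Next I would exploit the type-2 property of $L_p$ for $p\ge 2$: for any fixed elements $w_k:=v_k\eta_k\in L_p(\Omega)$ one has
$$
\E_\eps\bigl\|\sum_k \eps_k w_k\bigr\|_{L_p} \le \bigl(\E_\eps\bigl\|\sum_k \eps_k w_k\bigr\|_{L_p}^2\bigr)^{1/2} \le T_2(L_p)\Bigl(\sum_k \|w_k\|_{L_p}^2\Bigr)^{1/2},
$$
where $T_2(L_p)\le C\sqrt{p}$ is the type-2 constant of $L_p$ (equivalently, this step can be carried out by hand via Khintchine's inequality in $L_p$, which is where the $\sqrt p$ enters). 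Since $\|w_k\|_{L_p}=|v_k|\,(\E|\eta_k|^p)^{1/p} \le |v_k|\max_i(\E|\eta_i|^p)^{1/p}$, the sum is at most $\bigl(\sum_k v_k^2\bigr)^{1/2}\max_i(\E|\eta_i|^p)^{1/p} = \max_i(\E|\eta_i|^p)^{1/p}$, using $|v|=1$. Combining the two displays yields $\sigma_p(\eta)\le C\sqrt{p}\,D\max_i(\E|\eta_i|^p)^{1/p}$, and taking the supremum over $v$ with $|v|=1$ completes the proof with $C_p=C\sqrt p$.

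The only delicate point is the second step: one must invoke (or reprove) the fact that $L_p$ has type $2$ with constant $O(\sqrt p)$ when $p\ge 2$. This is standard — it follows from the scalar Khintchine inequality applied pointwise in $\Omega$ together with Minkowski's integral inequality in $L_{p/2}$ — but it is the substantive ingredient, and it is exactly the place where the hypothesis $p\ge 2$ is used (for $p<2$ the space $L_p$ has no nontrivial type, and the estimate fails). Everything else is a direct chain of Hölder/Minkowski-type inequalities and the normalization $|v|=1$.
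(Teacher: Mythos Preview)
Your proof is correct and is essentially identical to the paper's: apply RUD to $\sum v_i\eta_i$, then use the type-$2$ property of $L_p$ for $p\ge 2$ to bound the Rademacher average by $(\sum v_i^2\|\eta_i\|_{L_p}^2)^{1/2}$, and conclude using $|v|=1$. The only (inconsequential) slip is the parenthetical remark that $L_p$ has ``no nontrivial type'' for $p<2$; in fact $L_p$ has type $p$ for $1<p<2$, but indeed not type $2$, which is all that matters here.
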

\begin{proof}
    We know that the space $L_p$, $p\ge 2$ has type $2$. Therefore, for any unit
    vector $v$, we have
    \begin{multline*}
    \|\langle \eta,v\rangle\|_{L_p}
    = \|\sum_{i=1}^N v_i\eta_i\|_{L_p} 
    \le D \E_\eps\|\sum_{i=1}^N \eps_i\eta_i v_i\|_{L_p} \le \\
    \le C_p D (\sum_{i=1}^N v_i^2 \|\eta_i\|_{L_p}^2)^{1/2} \le C_p D \max\|\eta_i\|_{L_p}.
    \end{multline*}
\end{proof}

Recall the property ($\star$) for a random vector $\xi=(\xi_1,\ldots,\xi_N)$:
either the distribution  $\xi$ is unconditional, i.e.
$\Law(\xi_1,\ldots,\xi_N)=\Law(\pm\xi_1,\ldots,\pm\xi_N)$ for any choice of
signs or random variables  $\xi_1,\ldots,\xi_N$ are independent and their
mathematical expectations are zero.

\begin{statement}
    \label{stm_uncond}
    Let $1<q\le 2$ and random vector  $\xi=(\xi_1,\ldots,\xi_N)$ satisfies
    ($\star$), assume that  $1\le \E|\xi_i|^q< \infty$ and $i=1,\ldots,N$.
    Then for any $n<N$ we have
$$
    d_n^{\mathrm{avg}}(\xi, \ell_q^N)_q \ge c_q(N-n)^{1/q}.
$$
\end{statement}

\begin{proof}
    Since the mapping  $(x_1,\ldots,x_N)\mapsto(a_1x_1,\ldots,a_Nx_N)$
    is linear and does not increase distances when $\|a\|_\infty\le 1$, we can
    assume that $\E|\xi_i|^q=1$ for $i=1,\ldots,N$.

    Let $\phi(t):=|t|^{q-1}\sign t$. Consider the random vector  $\eta$, given
    by $\eta_i = \phi(\xi_i)-\E\phi(\xi_i)$. It is clear that  $\xi$ and $\eta$
    are biorthogonal. We show that  $\eta$ also satisfies property  ($\star$).
    Really, the independence of  $\xi_i$ leads to the independence of $\eta_i$,
    and $\E\eta_i=0$. For unconditional $\xi$ since our function $\phi$ is odd
    we have
    $\E\phi(\xi_i)=0$ and the random vector  $(\phi(\xi_i))_{i=1}^N$ is also unconditional.

    We remark that
    $$
    \|\eta_i\|_{L_{q'}} \le \|\phi(\xi_i)\|_{L_{q'}} +
    \|\phi(\xi_i)\|_{L_1} \le 2\|\phi(\xi_i)\|_{L_{q'}} =
    2\|\xi_i\|_{L_q}^{q/q'}.
    $$
    Since $\eta$ satisfies ($\star$), it is unconditional in $L_{q'}$ i.e. 
    $$
    \max_{\eps_1,\ldots,\eps_N\in\{-1,1\}}\|\sum_{k=1}^N \eps_k c_k \eta_k \|_{L_{q'}}
    \le C(q) \, \min_{\eps_1,\ldots,\eps_N\in\{-1,1\}}\|\sum_{k=1}^N \eps_k c_k \eta_k \|_{L_{q'}}.
$$
    For independent zero-mean $\eta_i$ this is a well--known property; see,
    e.g., \cite{AS10}[\S3].
    Therefore $\eta$ has the RUD-property in $L_{q'}$ and we can use~\eqref{glus}:
    $$
    \sigma_{q'}(\eta)^{q'} \le C_1(q)\max\E|\eta_i|^{q'} \le C_2(q)\max\E|\xi_i|^q = C_2(q).
$$
Finally, we apply Theorem~\ref{th_sp}.
\end{proof}

Consider the group  $G :=
\mathbb Z_2^N\times \mathbb Z_N$, that acts on  $\R^N$ via changes of signs
and cyclic shifts of coordinates:
$$
g x = (\tau,k) x = ((-1)^{\tau_1}x_{k+1}, \ldots, (-1)^{\tau_N}x_{k+N})
$$
(the indices are added mod $N$).
Let $\mathbf{g}$ be the uniformly distributed random element of  $G$. 

\begin{corollary}
    Let $1<q\le 2$, $N\in\mathbb N$.
    For any $x\in\R^N$ and $n<N$ we have:
    \begin{equation}
    \label{orbit}
        d_n^{\mathrm{avg}}(\mathbf{g}x,\ell_q^N)_q\ge c(q)(1-n/N)^{1/q}\|x\|_q.
    \end{equation}
    Moreover, for any $n$-dimensional subspace $Q_n\subset\R^N$,
    $$
        \P\left\{\rho(\mathbf{g}x,Q_n)_q \ge c_1(q)(1-n/N)^{1/q}\|x\|_q\right\}
        \ge c_2(q)(1-n/N).
    $$
\end{corollary}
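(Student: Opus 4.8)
The first inequality~\eqref{orbit} is immediate from what has already been set up: for fixed $x$ the random vector $\xi := \mathbf{g}x$ satisfies $(\star)$ (it is unconditional by the sign-flip part of the action) and $\E|\xi_i|^q = N^{-1}\|x\|_q^q$, so after the normalization used in Statement~\ref{stm_uncond} one applies that statement (or~\eqref{proof_main_th}) directly. The real content is the second, "moreover" assertion, which upgrades the $L_q$-average bound to a bound on the probability that a single random rotate of $x$ is far from $Q_n$. The plan is to deduce this from the first inequality by a Markov/Paley--Zygmund-type argument, controlling the random variable $R := \rho(\mathbf{g}x,Q_n)_q$ from above in sup-norm and from below in $L_q$-average.

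First I would record the trivial deterministic upper bound $R = \rho(\mathbf{g}x,Q_n)_q \le \|\mathbf{g}x\|_q = \|x\|_q$ for every realization (taking $y=0$), since the $G$-action preserves $\|\cdot\|_q$. Next, by the part of the Corollary already proved (equivalently by Statement~\ref{stm_uncond} applied to $\xi=\mathbf gx$), we have the lower bound on the $q$-th moment
$$
\E R^q = \big(d\text{-type quantity}\big) \ge c(q)^q (1-n/N)\,\|x\|_q^q,
$$
more precisely $\E\rho(\mathbf gx,Q_n)_q^q \ge c(q)^q(1-n/N)\|x\|_q^q$ for \emph{every} subspace $Q_n$ of dimension $\le n$, since the infimum over $Q_n$ is bounded below. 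Now split the expectation at the threshold $t_0 := c_1(q)(1-n/N)^{1/q}\|x\|_q$ with $c_1(q)$ a small multiple of $c(q)$ to be fixed: writing $A := \{R \ge t_0\}$,
$$
c(q)^q(1-n/N)\|x\|_q^q \le \E R^q = \E R^q \mathbf 1_A + \E R^q \mathbf 1_{A^c}
\le \|x\|_q^q \cdot \P(A) + t_0^q.
$$
Choosing $c_1(q)^q := \tfrac12 c(q)^q$ absorbs the last term and yields $\P(A) \ge \tfrac12 c(q)^q (1-n/N)$, which is exactly the claimed bound with $c_2(q) := \tfrac12 c(q)^q$.

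I do not expect a serious obstacle here; the only point requiring a little care is the direction of the logic — one must use the lower bound on $\E R^q$ that holds uniformly over \emph{all} admissible $Q_n$ (this is precisely what Statement~\ref{stm_uncond} / the first half of the Corollary give), and then the sup-norm bound $R\le\|x\|_q$ to truncate. If a cleaner constant is desired one can instead run the Paley--Zygmund inequality $\P(R^q \ge \theta \E R^q) \ge (1-\theta)^2 (\E R^q)^2/\E R^{2q}$ together with $\E R^{2q}\le \|x\|_q^q \E R^q$, which gives the same shape of bound; but the elementary truncation above is shortest and suffices.
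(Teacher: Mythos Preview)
Your proposal is correct and matches the paper's approach exactly: the paper proves the first inequality by noting that $\xi=\mathbf{g}x$ is unconditional with $\E|\xi_i|^q=N^{-1}\|x\|_q^q$ and invoking Statement~\ref{stm_uncond}, and for the second it says only that one ``uses the first one and the fact that $\rho(\mathbf{g}x,Q_n)_q\le\|x\|_q$''. Your truncation argument is precisely the intended filling-in of that one-line hint.
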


\begin{proof}
    The random vector $\xi=\mathbf{g}x$ is unconditional and 
    $\E|\xi_i|^q=N^{-1}\|x\|_q^q$, therefore we can apply
    Statement~\ref{stm_uncond} to obtain the first inequality.
    The second inequality uses the
    first one and the fact that $\rho(\mathbf{g}x,Q_n)_q\le\|x\|_q$.
\end{proof}

For $q>1$ we can restate Theorem~\ref{th_uncond_cyclic} in the following way.
\begin{statement}
    Let $K$ be an unconditional set in $\R^N$ that is also invariant under
    cyclic shifts of coordinates. Then for $1<q\le 2$, any $n<N$ we have
    $$
    d_n(K,\ell_q^N) \ge c(q)\cdot\left(1-\frac{n}{N}\right)^{1/q}\sup_{x\in
    K}\|x\|_q.
    $$
\end{statement}

\begin{proof}
    If $K$ contains some vector $x$, then  $d_n(K,\ell_q^N)\ge
    d_n^{\mathrm{avg}}(\mathbf{g}x,\ell_q^N)_q$.
\end{proof}

\section{Examples of rigid sets}
\label{sec_examples}

\paragraph{Transitive actions.}
In Theorem~\ref{th_uncond_cyclic} we considered cyclic shifts of coordinates.
One can take any transitive group of permutations instead.

Suppose that $H$ is a subgroup of $S_N$. It acts in a usual way on the set $\{1,\ldots,N\}$. Recall that the
action is called transitive is for any $i,j\in\{1,\ldots,N\}$ there is $h\in H$
such that $h(i)=j$.  We have the following generalization of
Theorem~\ref{th_uncond_cyclic}.
\begin{statement}
    Let $H$ be a group that transitively acts on the set of $N$
    coordinates. Let $K$ be an unconditional set in $\R^N$ that is also
    invariant under the action of $H$, i.e. for all $\pi\in H$,
    $$
    (x_1,\ldots,x_N)\in K \quad \Rightarrow \quad (\pm x_{\pi(1)},\ldots, \pm
    x_{\pi(N)})\in K.
    $$
    Then for $1\le q\le 2$ the set $K$ is rigid in $\ell_q^N$, i.e.
    $$
    d_n(K,\ell_q^N) \ge c(q,\eps)\sup_{x\in K}\|x\|_q,\quad\mbox{if $n\le
    N(1-\eps)$.}
    $$
\end{statement}
The proof is the same as for Theorem~\ref{th_uncond_cyclic},
see~\eqref{proof_main_th}.


\paragraph{Mixed norms.}
Consider the space $\R^N$; assume that the set of the coordinates
is split into $b$ blocks, each of size $s$ (so, $N=sb$).
Given a vector $x=(x_i)\in\R^N$, we denote by $x[j]\in\R^s$ the vector of coordinates
of $x$ from the $j$-th block, i.e. $x[j] := (x_i)_{s(j-1)<i\le sj}$.
Recall the notion of a mixed norm. Let $\Phi$ and $\Psi$ be some
norms on $\R^s$ and $\R^b$, correspondingly.
Then the mixed norm of a vector $x\in\R^N$ is defined as
\begin{equation}
    \label{mixed_norm}
\|x\| := \Psi(z),\quad z:=(\Phi(x[1]),\ldots,\Phi(x[b])).
\end{equation}

Recall that a norm is called symmetric if its unit ball is unconditional and
invariant under arbitrary permutations of coordinates. We prove that the ball of the mix of
symmetric norms is rigid in $\ell_q^N$ for $q\le 2$.
%
\begin{statement}
    \label{stm_mixed_product}
    Let $s,b\in\mathbb N$, $N=sb$; $\Phi$ and $\Psi$ be some symmetric norms in
    $\R^s$ and $\R^b$, correspondingly. Let $B\subset\R^N$ be the unit ball of the mixed
    norm~\eqref{mixed_norm}. Then for any $1\le q\le 2$ and any $n\le
    N(1-\eps)$, $\eps\in(0,1)$, we have
    $$
    d_n(B,\ell_q^N) \ge c(q,\eps)\sup_{x\in B}\|x\|_q =
    c(q,\eps)\sup_{\Phi(y)\le 1}\|y\|_q\sup_{\Psi(z)\le 1}\|z\|_q.
    $$
\end{statement}

\begin{proof}
    We will construct an unconditional random vector $\xi$ in $B$. Let $y$
    be a vector with $\Phi(y)\le 1$ having maximal $\ell_q^s$-norm.
    For each block we consider the vector $\eta^j := \mathbf{g}^jy$, where
    $\mathbf{g}^j$ is a random element of the group $G$ of cyclic shifts and
    sign changes on $\R^s$.
    We also take a vector $z$, $\Psi(z)\le 1$, with the maximal $\ell_q^b$-norm and
    define $\zeta=(\zeta_1,\dots,\zeta_b) := \mathbf{g}z$ analogously. We assume that $\mathbf{g}$ and
    all $\mathbf{g}^j$, $j=1,\ldots,b$, are independent.

    Define  random vector $\xi=(\xi_1,\dots, \xi_N)=(\xi[1],\dots,\xi[b])$ by the equation
    $$
    \xi[j] = \zeta_j \eta^j,\quad j=1,\ldots,b.
    $$
    It is clear that $\xi$ is unconditional. Moreover, for any coordinate in the
    $j$-th block we have, using the independence,
    $$
    \E|\xi_i|^q = \E|\zeta_j|^q\,\E|\eta^j_i|^q =
    (b^{-1}\|z\|_q^q)\cdot(s^{-1}\|y\|_q^q).
    $$
    It remains to apply Theorem~\ref{thm_uncond}.
\end{proof}

A remarkable example of a mixed norm is the mix of $\ell_{q_1}^s$ and
$\ell_{q_2}^{b}$:
\begin{equation}\label{mixed_lp_norm}
\|x\|_{q_1,q_2} := (\sum_{j=1}^b (\sum_{s(j-1)<i\le sj}|x_i|^{q_1})^{q_2/q_1})^{1/q_2}
\end{equation}
(with the usual modification for $q_1=\infty$ or $q_2=\infty$).
The space with this norm is
denoted as $\ell_{q_1,q_2}^{s,b}$ and its unit ball is $B_{q_1,q_2}^{s,b}$, some authors prefer the notation $\ell_{q_2}^b(\ell_{q_1}^s)$ but we decided to keep the notation used in \cite{Vas24,Gal90,Iza90}. 
For the overview of the results on the widths of the balls in mixed norms, see~\cite{Vas24}.

  We have the following  \begin{corollary}
    \label{stm_mixed_lp}
    Let $1\le p_1,p_2\le\infty$; $1\le q\le 2$; $N=sb$. Then for $n\le
    N(1-\eps)$, $\eps\in(0,1)$, we have
    $$
    c(q,\eps)s^{(\frac1q-\frac1{p_1})_+}b^{(\frac1q-\frac1{p_2})_+}
    \le d_n(B_{p_1,p_2}^{s,b},\ell_q^N)
    \le s^{(\frac1q-\frac1{p_1})_+}b^{(\frac1q-\frac1{p_2})_+}.
    $$
    \end{corollary}
In our terminology, the balls $B_{p_1,p_2}^{s,b}$ are rigid in $\ell_q^N$, $q\le
2$, for all $1\le p_1,p_2\le\infty$.
In fact, we can prove   upper and lower bounds for the widths of $B_{p_1,p_2}^{s,b}$  in $\ell_{q_1,q_2}^{s,b}$.
 
\begin{remark}
    1. Assume that a tuple $(p_1,p_2,q_1,q_2)$, where $1\le p_1,p_2,q_1,q_2\le\infty$,
    satisfies two conditions: (i) $p_1\ge q_1$ or $q_1\le 2$; (ii) $p_2\ge q_2$ or
    $q_2\le 2$, and does not fall into the ``exceptional case''

    \begin{equation}
        \label{exception}
        q_1<\min(p_1,q_2)\mbox{ and }p_2<q_2\le 2.
    \end{equation}
    Then, for all $s,b\in\mathbb N$, $N:=sb$, and $\eps>0$, we have
  $$
        \label{mixed_generic}
        d_{N/2}(B_{p_1,p_2}^{s,b},\ell_{q_1,q_2}^{s,b})
        \ge c(q_1,q_2)d_0(B_{p_1,p_2}^{s,b},\ell_{q_1,q_2}^{s,b}).
$$

    2. For any other tuple $(p_1,p_2,q_1,q_2)$ there is some   $\gamma>0$ such that 
    $$
    d_{N/2}(B_{p_1,p_2}^{s,s},\ell_{q_1,q_2}^{s,s})
    \le N^{-\gamma} d_0(B_{p_1,p_2}^{s,s},\ell_{q_1,q_2}^{s,s})
    $$
   holds for any   $N=s^2,s\in \mathbb{N}$.
   \end{remark}

The part 1 of this Remark uses standard facts about
the widths of $B_p^N$, Statement~\ref{stm_mixed_product} and the estimate
from~\cite{MR17}:
$$
    d_{N/2}(B_{1,\infty}^{s,b},\ell_{2,1}^{s,b})\ge cb.
$$

 It seems interesting that a mix of two rigid sets can be non rigid in the case of mixed norms, say 
    $$
    cs^{1/2} \le d_{s(s-1)/2}(B_{\infty,1}^{s,s},\ell_{1,2}) \le s^{1/2},
    $$
    but $d_0(B_{\infty,1}^{s,s},\ell_{1,2}) = s$.

We can interpret the part 1 of the Remark as the rigidity in mixed norms.
We will elaborate on this and give the
details of the proof in the forthcoming paper.

 We mention another generalization of Theorem \ref{th_uncond_cyclic}. Let $1\le
 q \le 2$ and each of the sets $K_1\subset \R^{s_1},\dots, K_b\subset
 \R^{s_b},N=s_1+\dots +s_b$ satisfies the assumptions of Theorem
 \ref{th_uncond_cyclic}.  We assume that  $\sup_{x\in K_j} \|x\|_qs_j^{-1/q}\ge
 1$ for $1\le j \le b $. Then $d_n(K_1\times \dots\times K_b,\ell_q^N)\ge
 c(q,\eps)N^{1/q}$ for any $n\le N(1-\eps).$ This result generalizes one lemma
 by Vasil'eva \cite{Vas23}.

\paragraph{Hierarchical unconditionality.}
Consider the space $\R^N$, where the set of the coordinates is split into $b$
blocks of sizes $s_1,\ldots,s_b$; for $x\in\R^N$ we denote by
$x[j]\in\R^{s_j}$ the $j$-th block of $x$.

We have the following generalization of Theorem~\ref{thm_uncond}.
\begin{statement}
\label{stm_hierblock}
    Let $\xi=(\xi_1,\dots,\xi_N)=(\xi[1],\dots,\xi[b])$ be a random vector in $\R^N$, $N=s_1+\ldots+s_b$, that is block-unconditional, i.e.
    \begin{enumerate}
        \item $\xi[j]$ is unconditional for $j=1,\ldots,b$;
        \item $\Law(\xi[1],\ldots,\xi[b])=\Law(\pm\xi[1],\ldots,\pm\xi[b])$ for
            any choice of  signs.
    \end{enumerate}
    Suppose that $1< q\le 2$ and $1\le \E|\xi_i|^q< \infty$ for all $i$. Then
    $$
    d_n^\avg(\xi,\ell_q^N)_q \ge c(q,\eps)N^{1/q},\quad\mbox{if }n\le N(1-\eps).
    $$
\end{statement}
\begin{proof}
    The proof is similar to that  of Theorem~\ref{thm_uncond}. W.l.o.g.
    $\E|\xi_i|^q=1$. We define a biorthogonal vector
    $\eta=(\eta_1,\ldots,\eta_N)$, where
    $\eta_i:=|\xi_i|^{q-1}\sign\xi_i$, and then we bound the weak moment of $\eta$. Let
    $|v|=1$,  we have
    $$
    \|\langle v,\eta\rangle\|_{q'} = \left\|\sum_{j=1}^b
    |v[j]|\cdot \zeta^j \right\|_{q'},\quad \zeta^j :=
    \langle\frac{v[j]}{|v[j]|},\eta[j]\rangle.
    $$
    Using
    the unconditionality in each block, we obtain by Lemma~\ref{lem_rud} that
    $$
    \|\zeta^j\|_{q'} \le C_q\max_i \|\eta_i\|_{q'} = C_q.
    $$
    Next we use unconditionality between blocks:
    $$
    \|\, |v[1]|\cdot\zeta^1 + \ldots + |v[b]|\cdot\zeta^b \|_{q'}
    \le C_1(q)\max_j\|\zeta^j\|_{q'} \le C_2(q).
    $$
    It remains to apply Theorem~\ref{th_sp}.
\end{proof}

One can identify $N_1\times N_2$ matrices $M$ with vectors of size $N=N_1N_2$ that
consist of $b=N_2$ blocks; each block is some column of $M$.
Consider the action of the group
$G=S_{N_1}\times S_{N_2}\times \mathbb Z_2^{N_1}\times \mathbb Z_2^{N_2}$
on the matrix space $\R^{N_1\times N_2}$:
\begin{equation}\label{matrix_action}
(\pi^1,\pi^2,\tau^1,\tau^2)M = M',\quad M'_{i,j} :=
    (-1)^{\tau^1_i}(-1)^{\tau^2_j} M_{\pi^1(i),\pi^2(j)}.
\end{equation}
In other words, we permutate rows and columns and change signs of entire
rows/columns.

Note that permutations of rows/columns define a transitive (coordinate-wise)
action, so the usual reasoning applies.
\begin{statement}
\label{stm_matrices}
    Let $K$ be a set in the space of real $N_1\times N_2$ matrices  that is
    invariant under the action~\eqref{matrix_action}, i.e.  permutations of the sets of rows, the set of columns, and
    changes of sign of entire row or column. Then $K$ is rigid in $\ell_q^N$,
    $1<q\le 2$, i.e. $d_n(K,\ell_q^N) \ge c(q,\eps)\sup_{x\in K}\|x\|_q$, if $n\le N(1-\eps)$.
\end{statement}
Many sets of matrices satisfy this property, e.g. balls in
unitary--invariant norms, or balls in operator norms
between spaces $(\R^{N_2},\|\cdot\|)$ and $(\R^{N_1},\|\cdot\|)$ with
symmetric norms.

We can apply  our techniques  to the following  family of polytopes 
considered by A.~Vasilyeva in the context of the widths for mixed
norms~\eqref{mixed_lp_norm}.
Let $V_{k_1,k_2}^{N_1,N_2} := \mathrm{conv}\{gM \colon g\in
G\}$, where $M$ is the matrix such that $M_{i,j}=1$ for $1\le i\le k_1,
\,1\le j\le k_2$; and $M_{i,j}=0$ otherwise. The
 widths of these polytopes
   in mixed norms $\ell_{q_1,q_2}^{N_1,N_2}$, $q_1,q_2\ge 2$  were estimated  in ~\cite{Vas13}. We easily obtain from Statement \ref{stm_matrices} the estimate for $1<q\le 2,$ namely: 
   \begin{corollary}
    The set $V_{k_1,k_2}^{N_1,N_2}$ is rigid in $\ell_q^N$, $1<q\le 2$.
\end{corollary}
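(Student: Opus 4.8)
The plan is to deduce the corollary directly from Statement~\ref{stm_matrices} by checking that $V_{k_1,k_2}^{N_1,N_2}$ is invariant under the action~\eqref{matrix_action} and that its ``radius'' in $\ell_q^N$ is bounded below by a positive constant. First I would observe that the convex hull of $\{gM: g\in G\}$ is automatically $G$-invariant: for any fixed $g_0\in G$, the map $x\mapsto g_0 x$ is linear and permutes the finite set $\{gM: g\in G\}$ (since $G$ is a group), hence it maps the convex hull onto itself. Therefore $K := V_{k_1,k_2}^{N_1,N_2}$ satisfies the hypothesis of Statement~\ref{stm_matrices}, and we obtain $d_n(K,\ell_q^N)\ge c(q,\eps)\sup_{x\in K}\|x\|_q$ for $n\le N(1-\eps)$.

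Next I would lower-bound $\sup_{x\in K}\|x\|_q$. Since $M\in K$ and $M$ has exactly $k_1k_2$ entries equal to $1$, we have $\sup_{x\in K}\|x\|_q\ge \|M\|_q = (k_1k_2)^{1/q}\ge 1$. This already gives rigidity in the sense claimed, namely $d_n(V_{k_1,k_2}^{N_1,N_2},\ell_q^N)\ge c(q,\eps)$ for $n\le N(1-\eps)$; if one wants the sharper statement $d_n\ge c(q,\eps)(k_1k_2)^{1/q}$ one simply keeps the factor $\|M\|_q$ in the estimate. Combining the two steps yields the corollary.

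I do not expect any real obstacle here: the only points to verify are the group-invariance of a convex hull of an orbit (routine) and the trivial bound on $\|M\|_q$. The substantive content has already been carried out in Statement~\ref{stm_matrices}, whose proof in turn rests on Statement~\ref{stm_hierblock} and Theorem~\ref{th_sp}. Thus the proof is essentially a one-liner once those are in place.

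\begin{proof}
    The set $V_{k_1,k_2}^{N_1,N_2}=\mathrm{conv}\{gM:g\in G\}$ is invariant
    under the action~\eqref{matrix_action}: for any $g_0\in G$ the linear map
    $x\mapsto g_0x$ permutes the finite set $\{gM:g\in G\}$, hence preserves its
    convex hull. Applying Statement~\ref{stm_matrices} we get
    $d_n(V_{k_1,k_2}^{N_1,N_2},\ell_q^N)\ge c(q,\eps)\sup_{x\in
    V_{k_1,k_2}^{N_1,N_2}}\|x\|_q$ for $n\le N(1-\eps)$. Since $M$ itself lies in
    $V_{k_1,k_2}^{N_1,N_2}$ and has $k_1k_2$ entries equal to $1$, the supremum
    is at least $\|M\|_q=(k_1k_2)^{1/q}\ge 1$, which proves rigidity.
\end{proof}
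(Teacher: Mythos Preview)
Your proof is correct and matches the paper's approach exactly: the paper simply states that the corollary follows easily from Statement~\ref{stm_matrices}, and you have written out precisely that verification (the $G$-invariance of the orbit's convex hull and the bound $\sup_{x\in K}\|x\|_q\ge\|M\|_q=(k_1k_2)^{1/q}$).
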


We note that the case $q=1$ is not covered by Statement \ref{stm_hierblock} and its corollaries.

\paragraph{Isotropic sets and random vectors.}
Let us give some corollaries of Theorem~\ref{th_sp} that hold for all
$1<q<\infty$.

It is known that the weak moments of all orders are equivalent for log--concave
random vectors: $\sigma_{p_2}(\eta)\le c(p_2/p_1)\sigma_{p_1}(\eta)$, $1\le
p_1<p_2<\infty$, see~\cite[Th.2.4.6]{BGVV}.

A random vector $\xi=(\xi_1,\dots, \xi_N)$ is called isotropic if it is
biorthogonal to himself, i.e. $\E\xi_i^2=1$, $\E\xi_i\xi_j=0$ for all $i\ne j$.

\begin{corollary}
    Let $\xi$ be an isotropic log--concave random vector in $\R^N$. Then for any
    $q\in(1,\infty)$ and $n\le N(1-\eps)$, $\eps\in(0,1)$, we have
    $$
    d^\avg_n(\xi,\ell_q^N)_q \ge c(q,\eps)N^{1/q}.
    $$
    Therefore, for any convex body $K\subset\R^N$ such that
    uniformely distributed random vector in $K$ is isotropic,
    $$
    d_n(K,\ell_q^N) \ge c(q,\eps) N^{1/q}.
    $$
    In particular, for any orthogonal operator $U\in \mathrm{O}(N)$ we have
    $$
    d_n(UB_\infty^N,\ell_q^N) \ge c(q,\eps)N^{1/q}.
    $$
\end{corollary}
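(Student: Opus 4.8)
The plan is to invoke Theorem~\ref{th_sp} with the biorthogonal pair $(\xi,\eta)=(\xi,\xi)$: isotropy of $\xi$ means precisely that $\xi$ is biorthogonal to itself, so the theorem applies and everything reduces to bounding the single quantity $\sigma_{q'}(\xi)$ by a constant depending only on $q$. The starting point is the identity $\sigma_2(\xi)=1$, which is immediate: for a unit vector $v$ one has $\E|\langle\xi,v\rangle|^2=\sum_{i,j}v_iv_j\,\E\xi_i\xi_j=|v|^2=1$.

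I would then split at $q=2$. For $q\ge 2$ we have $q'\le 2$, so the power--mean (Hölder) inequality gives $\sigma_{q'}(\xi)\le\sigma_2(\xi)=1$, and the second bound of Theorem~\ref{th_sp} already yields $d_n^\avg(\xi,\ell_q^N)_q\ge(N-n)^{1/2}N^{1/q-1/2}\ge\eps^{1/2}N^{1/q}$; note this part does not use log-concavity at all. For $1<q<2$ we have $q'>2$, and here log-concavity enters exactly once: by the equivalence of weak moments of log-concave vectors (\cite[Th.2.4.6]{BGVV}) applied with exponents $2$ and $q'$, $\sigma_{q'}(\xi)\le c(q'/2)\,\sigma_2(\xi)=c(q)$; plugging into the first bound of Theorem~\ref{th_sp} gives $d_n^\avg(\xi,\ell_q^N)_q\ge(N-n)^{1/q}/c(q)\ge(\eps N)^{1/q}/c(q)=c(q,\eps)N^{1/q}$. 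This settles the first displayed inequality.

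For the second inequality, take $\xi$ to be the uniform random vector on $K$; its density is a constant multiple of $\mathbf 1_K$, hence log-concave, and it is isotropic by hypothesis, so the first part applies. Since $\xi\in K$ almost surely, for every subspace $Q_n$ one has $\rho(\xi,Q_n)_q\le\sup_{x\in K}\rho(x,Q_n)_q$, whence $d_n^\avg(\xi,\ell_q^N)_q\le d_n(K,\ell_q^N)$; combining the two inequalities gives the claim. For the last assertion I would apply the previous item to $K=\sqrt3\,UB_\infty^N$: the uniform vector on $B_\infty^N$ has covariance $\tfrac13 I$ (its coordinates are independent and uniform on $[-1,1]$), so the uniform vector on $\sqrt3\,B_\infty^N$ is isotropic, and the orthogonal map $U$ preserves isotropy because $U(\tfrac13 I)U^\top=\tfrac13 I$; hence the uniform vector on $K$ is isotropic. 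Thus $d_n(\sqrt3\,UB_\infty^N,\ell_q^N)\ge c(q,\eps)N^{1/q}$, and dividing through by $\sqrt3$ finishes the proof.

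There is essentially no obstacle here: all the weight sits in Theorem~\ref{th_sp} and in the cited moment equivalence for log-concave measures. The only points requiring a little care are the case distinction at $q=2$ (log-concavity being needed only on the side $q<2$, where $q'>2$ and the trivial comparison $\sigma_{q'}\le\sigma_2$ is unavailable) and the harmless rescaling by $\sqrt3$ in the cube example.
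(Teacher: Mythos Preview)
Your proof is correct and follows exactly the approach the paper intends: apply Theorem~\ref{th_sp} with $\eta=\xi$ (isotropy gives biorthogonality and $\sigma_2(\xi)=1$), then control $\sigma_{q'}(\xi)$ via the moment equivalence for log-concave vectors cited just before the corollary. Your write-up is in fact more detailed than the paper's (the paper leaves the proof implicit), and your observation that log-concavity is only used for $1<q<2$, together with the clean handling of the $\sqrt3$ rescaling for the cube, are nice touches.
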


\paragraph{Remarks.}
We note that Theorem~\ref{th_sp} is unitary invariant: if $\xi$ and
$\eta$ are biorthogonal, then $U\xi$ and $U\eta$ are biorthogonal too and
$\sigma_q(\eta)=\sigma_q(U\eta)$ for any $U\in \mathrm{O}(N)$. Hence one can replace
$\xi$ by $U\xi$ in the statements proved above.

We think that it will be interesting to obtain further generalizations of
Theorem~\ref{th_sp}.
We outline one such result. 
A random vector $\zeta$ in $\R^N$ defines the family of norms
$$
\|x\|_{L_p(\zeta)} := (\E|\langle x,\zeta\rangle|^p)^{1/p}.
$$
The unit ball of the dual space $L_p^*(\zeta)$ is called the $L_p$-centroid body
$Z_p(\zeta)$; see \cite[Ch.5]{BGVV}.

\begin{statement}
    Let $1<q<\infty$ and let $\xi,\eta,\zeta$ be some random vectors in $\R^N$ such that $\xi$ and
    $\eta$ are biorthogonal. Then for any $n<N$ we have
$$
    d_n^\avg(\xi,L_{q'}^*(\zeta))_q \cdot \sigma_{q'}(\eta) \cdot (\E|\zeta|^{q'})^{1/q'} \ge
    N-n.
$$
\end{statement}
The proof is completely analogous to that of Theorem~\ref{th_sp}.

\end{document}